\newcommand{\Z}{\mathbb{Z}}
\newcommand{\C}{\mathbb{C}}
\newcommand{\R}{\mathbb{R}}
\newcommand{\K}{\mathbb{K}}
\newcommand{\G}{\mathbb{G}}
\newcommand{\mB}{\mathsf{B}}
\newcommand{\mytr}[1]{{#1}^{\top}}
\newtheorem{theorem}{Theorem}
\newtheorem{lemma}{Lemma}
\theoremstyle{definition}
\tikzstyle{hvector}=[draw=blue!50,fill=blue!10,thick]
\title{The Alexander polynomial as a universal invariant}
\author{Rinat Kashaev}
\address{Section de math\'ematiques, Universit\'e de Gen\`eve,
2-4 rue du Li\`evre, 1211 Gen\`eve 4, Suisse\\}
\email{rinat.kashaev@unige.ch}
\date{July 21, 2020.}
\thanks{2020MSC: 57K16, 57K14, 57K10}
\thanks{Supported in part by the Swiss National Science Foundation, the subsidy no~$200020\_192081$.}
\begin{document}
\begin{abstract} Let $\mB_1$ be the polynomial ring $\C[a^{\pm1},b]$ with the structure of a complex Hopf algebra induced from its interpretation as the algebra of regular functions on the affine linear algebraic group  of complex invertible upper triangular 2-by-2 matrices of the form $\left(
\begin{smallmatrix}
 a&b\\0&1
\end{smallmatrix}\right)$. We prove that the universal invariant of a long knot $K$ associated to $\mB_1$ is the reciprocal of the canonically normalised Alexander polynomial $\Delta_K(a)$. Given the fact that $\mB_1$ admits a $q$-deformation $\mB_q$ which underlies the (coloured) Jones polynomials, our result provides another conceptual interpretation for the Melvin--Morton--Rozansky conjecture proven by Bar-Nathan and Garoufalidis, and Garoufalidis and L\^e.
\end{abstract}
\maketitle
\section{Introduction}

The universal quantum knot invariants introduced and studied in a number of works~\cite{MR1025161,MR1124415,MR1153694,MR1227011,MR1324033, MR2186115,MR2253443,MR2251160} is a convenient tool allowing to encode the multitude of quantum invariants associated to a given Hopf algebra into a single algebraic object in completely representation independent way. As a result, the universal invariants are of great potential for conceptual understanding and organisation of the diversity of quantum invariants though the increased computational complexity makes them perhaps less useful for practical calculations. Nonetheless, as shows the example of the logarithmic invariants of Murakami--Nagatomo~\cite{MR2466562}, invariants associated to non semi-simple representations are sometimes better accessible through the universal invariants than directly from the R-matrix calculations.  

In this paper we address the problem of identification of the universal invariant of long knots in one of the simplest cases of non-trivial Hopf algebras, namely the  commutative but non co-commutative complex Hopf algebra $\mB_1:=\C[a^{\pm1},b]$ with the group-like element $a$ and the element $b$ with the co-product 
 \begin{equation}\label{eq:coprod-b}
\Delta(b)=a\otimes b+b\otimes 1.
\end{equation}
More abstractly, one can think of $\mB_1$ as the algebra of regular functions $\C[\operatorname{Aff}_1(\C)]$ on the affine linear algebraic group $\operatorname{Aff}_1(\C):=\G_a(\C)\rtimes \G_m(\C)$ of invertible upper triangular complex 2-by-2 matrices of the form $\left(
\begin{smallmatrix}
 a&b\\0&1
\end{smallmatrix}\right)$ where the Hopf algebra structure is canonically induced by the group structure of $\operatorname{Aff}_1(\C)$, see~\cite{MR547117}. 

The (maximal) Drinfeld's quantum double of $\mB_1$ is a Hopf algebra $D(\mB_1)$ which contains two Hopf sub-algebras: $\mB_1$ and the universal enveloping algebra of the Lie algebra of $\operatorname{Aff}_1(\C)$ generated by two primitive elements $\phi$ and $\psi$ subject to the commutation relation
\begin{equation}\label{eq:cr-lie-alg}
 \phi\psi-\psi\phi=\phi.
\end{equation}
Within the algebra $D(\mB_1)$, the element $a$ is  central while the element $b$ interacts with $\phi$ and $\psi$ through the commutation relations
\begin{equation}
 \phi b-b\phi=1-a ,\quad\psi b-b\psi=b.
\end{equation}
The formal universal R-matrix of $D(\mB_1)$ 
\begin{equation}\label{eq:un-r-mat}
 R=(1\otimes a)^{\psi\otimes 1}e^{\phi\otimes b}=\sum_{m,n\ge0}\frac 1{n!}\binom{\psi}{m}\phi^n\otimes (a-1)^m b^n,
\end{equation}
finds itself behind the associated universal invariant $Z_{\mB_1}(K)$ of a long knot $K$ which is a central element  of a specific ``profinite completion'' of $D(\mB_1)$ obtained as the convolution algebra $(D(\mB_1)^o)^*$ of the co-algebra structure of the restricted dual Hopf algebra $D(\mB_1)^o$. The following main result of this paper was conjectured in~\cite{Kashaev2019}.

\begin{theorem}\label{thm-1}
 The universal invariant of a long knot $K$ associated to the Hopf algebra $\mB_1$ is of the form
$
Z_{\mB_1}(K)=(\Delta_K(a))^{-1}
$
 where $\Delta_K(t)$ is the (canonically normalised) Alexander polynomial of $K$. 
\end{theorem}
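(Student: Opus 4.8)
The plan is to reduce the universal invariant to an explicit finite matrix computation and then to recognise the outcome as the reciprocal Alexander polynomial. First I would fix a braid presentation $K=\widehat\beta$ with $\beta\in B_n$, cut one strand open to realise $K$ as a $(1,1)$-tangle, and write $Z_{\mathsf B_1}(K)$ in the standard way as the partial ``quantum trace'' (with respect to the ribbon and pivotal elements of $D(\mathsf B_1)$) of the word in $R^{\pm1}$ read off from $\beta$, a priori an element of the profinite completion $(D(\mathsf B_1)^o)^*$. Since $a$ is central in $D(\mathsf B_1)$ and $Z_{\mathsf B_1}(K)$ is a central element of that completion, it suffices to pin down its value, and for this I would evaluate on a suitable family of finite-dimensional modules/comodules.

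Next I would exploit the triangular form $R=(1\otimes a)^{\psi\otimes1}e^{\phi\otimes b}$ together with the facts that $b$ is $(a,1)$-skew-primitive and that $\phi b-b\phi=1-a$ is central. This forces all the relevant computations into a ``first order'' regime: modulo the appropriate filtration the $b$-leg of each $R$ enters at most linearly, and the $\phi,\psi$-legs act on the fundamental comodule $\C\oplus\C b$ (the defining $2$-dimensional comodule of $\operatorname{Aff}_1(\C)$) by a matrix conjugate to the Burau matrix $\left(\begin{smallmatrix}1-a&a\\1&0\end{smallmatrix}\right)$. Evaluating $Z_{\mathsf B_1}(K)$ on this comodule should then turn the word in $R^{\pm1}$ into the corresponding product of (reduced) Burau matrices, and opening up one strand of the closed braid replaces the usual determinant $\det(\mathrm{id}-\bar B_\beta(a))$ by a geometric resummation, i.e.\ a \emph{reciprocal} determinant (a cofactor of the resolvent $(\mathrm{id}-\bar B_\beta(a))^{-1}$) --- this is the structural reason the answer is $\Delta_K^{-1}$ rather than $\Delta_K$.

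I would then invoke the classical Burau--Alexander identity: for $\beta\in B_n$ with $\widehat\beta=K$ a knot, $\det(\mathrm{id}-\bar B_\beta(t))=\pm t^{k}\,\Delta_K(t)/(1+t+\dots+t^{n-1})$, and check that the framing correction coming from the ribbon element, together with the cyclotomic factor $1+t+\dots+t^{n-1}$, cancels, leaving exactly $\Delta_K(a)^{-1}$; the unknot value $Z_{\mathsf B_1}(\text{unknot})=1$ fixes the residual sign and monomial ambiguity, yielding the canonically normalised (symmetric, unknot$\,\mapsto 1$) Alexander polynomial. An alternative to these last two steps is to verify directly that $Z_{\mathsf B_1}(K)$ satisfies the Conway skein relation --- using that $R-R^{-1}$ restricts to a rank-one operator on the fundamental comodule --- and to conclude by uniqueness of the Alexander--Conway polynomial.

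The main obstacle I anticipate is precisely the passage from the a priori infinite, non-commutative object in $(D(\mathsf B_1)^o)^*$ to the finite Burau matrix product: one must show that the $\phi$- and $\psi$-series resum convergently in the chosen profinite completion, that nothing beyond ``first order'' survives after taking the partial trace, and that the pivotal and ribbon elements of $D(\mathsf B_1)$ act on the fundamental comodule exactly as the bookkeeping of the Burau computation requires. Closely related and equally delicate is the careful tracking of all framing and normalisation constants, so that the final identification is $\Delta_K(a)^{-1}$ on the nose rather than up to a unit $\pm a^{k}$.
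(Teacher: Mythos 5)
Your plan founders at its central step: the reduction to the ``fundamental comodule $\C\oplus\C b$''. To pin down the central element $Z_{\mB_1}(K)$ you must evaluate on finite-dimensional $D(\mB_1)$-modules (equivalently $D(\mB_1)^o$-comodules), and on every such module the element $a$ is forced to be unipotent: $1-a=[\phi,b]$ has trace zero while $a$ is central, so $a=1+\text{nilpotent}$ on each indecomposable summand. Consequently there is no finite-dimensional comodule on which the braiding is the Burau matrix with a free parameter $a$ (or $t=a^{-1}$); the relation $[\phi,b]=1-a$ with $a$ a non-unit scalar can only be realised in infinite dimensions, exactly as $[\frac{\partial}{\partial z},\hbar z]=\hbar$. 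For the same reason your mechanism for the reciprocal determinant does not work: a partial trace of a word in finite matrices over a finite-dimensional comodule is a \emph{polynomial} in the Burau entries and can never produce $\det(I_{n-1}-\hat\beta_n)^{-1}$; a cofactor of the resolvent would in any case be a ratio of a minor by the full determinant, not the plain reciprocal. The paper obtains the inverse determinant from a genuinely infinite-dimensional representation of $D(\mB_1)$ on $A^1[[\hbar]]$ (coherent states, $a\mapsto1+\hbar$, $b\mapsto\partial/\partial z$, $\phi\mapsto\hbar z$): there $\rho^{\otimes2}(R)P$ acts on coherent states by the Burau block $U$ on their \emph{parameters}, and closing off $n-1$ strands of the braid is a Gaussian integral over $\C^{n-1}$ (a bosonic Fock-space partial trace, i.e.\ MacMahon's master theorem), which yields $e^{\cdots}/\det(I_{n-1}-\hat\beta_n)$; centrality then forces the exponential prefactor to be the identity kernel $e^{\bar w v}$. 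This infinite-dimensional, formal-power-series-in-$\hbar$ setting is not an optional convenience but the way around the unipotency obstruction you would otherwise hit.

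Two further points would need repair even granting a Burau-type reduction. First, your Burau--Alexander identity is misstated: the cyclotomic factor multiplies $\Delta_K$ rather than divides it (Kassel--Turaev give $\Delta_K(t)=(-1)^{n-1}t^{(n-1-g(\beta))/2}\frac{t-1}{t^n-1}\det(\psi_n^r(\beta)-I_{n-1})$), and it does not cancel against a ribbon/framing correction; in the paper it is absorbed by Lemma~\ref{lem-2}, which converts the reduced-Burau determinant into the unreduced minor $\det(I_{n-1}-\hat\beta_n)$ appearing in the Gaussian integral, while the writhe correction $t^{(g(\beta)+n-1)/2}$ is computed separately from the curl diagrams (the formal $R$ is not an honest ribbon element of $D(\mB_1)^{\otimes2}$, so the paper works with the long-knot writhe-compensation diagrams $\xi^k$ instead of pivotal/ribbon bookkeeping). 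Second, the skein-relation fallback is not viable as stated: $1/\Delta_K$ does not satisfy the Conway relation, and a skein triple of a knot generally involves a two-component link, which lies outside the long-knot universal-invariant framework used here.
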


The reciprocal of the Alexander polynomial in this theorem should be thought of as an element of the ring of formal power series $\C[[a-1]]$ which naturally arises upon consideration of all finite dimensional representations of $D(\mB_1)$ where the element $a$  is always unipotent  (that is $a-1$ is nilpotent).
 Taking into account the close relationship of the Alexander polynomial with the Burau representation of the braid groups~\cite{MR3069652}, it is interesting to note that Salter in a recent work \cite{Salter2019} considers the Burau representation over the ring $\Z[[t-1]]$ in order to show that the Burau images of the braid groups are dense in Squier's unitary groups relative to the topology induced by the formal power series. 

The algebra $\mB_1$ can be $q$-deformed to a non-commutative Hopf algebra $\mB_q$ with the same co-algebra structure~\eqref{eq:coprod-b} but with a $q$-commutative relation $ab=qba$. For $q$ not a root of unity, the quantum double $D(\mB_q)$ contains the quantum group $U_q(sl_2)$ as a Hopf sub-algebra. In particular, for each $n\in\Z_{>0}$, it admits an $n$-dimensional irreducible representation corresponding to the $n$-th coloured Jones polynomial. In the large $n$ limit with $q=t^{1/n}$ and fixed $t$, one recovers an infinite-dimensional representation of the Hopf algebra $D(\mB_1)$ where the central element $a$ is realised by the scalar $t$. From that standpoint, Theorem~\ref{thm-1} is consistent with the Melvin--Morton--Rozansky conjecture proven by Bar-Nathan and Garoufalidis in \cite{MR1389962} and  by Garoufalidis and L\^e in \cite{MR2860990}.

Theorem~\ref{thm-1}, in conjunction with the group-like nature of the element $a$, makes absolutely transparent a result of Burau~\cite{MR3069587,MR3069631} on the property of the Alexander polynomial related to cables: if one takes the $n$-th cable of a knot $K$ and composes it with the braid of $n$ strands which brings the first strand under all others to $n$-th position, then the standard closure of the obtained string link gives a knot whose Alexander polynomial is $\Delta_K(t^n)$.

The main tool of the proof of Theorem~\ref{thm-1} is the use of a specific infinite dimensional representation of $D(\mB_1)$ on a dense vector subspace $A^1$ of a complex Hilbert space of square integrable holomorphic functions on $\C$ considered over the algebra of formal power series $\C[[\hbar]]$.  The evaluation of the formal universal R-matrix~\eqref{eq:un-r-mat} under this representation is a well defined element of the algebra $(\operatorname{End} (A^1))^{\otimes2}[[\hbar]]$, and it is this property of the R-matrix which, from one hand side, makes the corresponding Reshetikhin--Turaev functor well defined as formal power series in $\hbar$ despite the infinite dimensionality of the representation, and from the other hand, it allows us to use the Gaussian integrals to express the quantum invariant in terms of a minor of the unreduced Burau representation matrix.  Similarly to the work~\cite{MR1612375},  the identification of the quantum invariant with the Alexander polynomial is accomplished through a direct relationship of the latter to a minor of the unreduced Burau representation matrix.
\begin{theorem}[\cite{MR1133269}]\label{thm-2}
 Let a knot $K$ be the closure of a braid $\beta\in B_n$ and $\psi_n(\beta)\in \operatorname{GL}_n(\Z[t^{\pm1}])$ the image of $\beta$ under the unrestricted Burau representation (where the images of the standard Artin generators are linear in $t$). Let $\hat{\beta}_n$ be the $(n-1)\times(n-1)$ matrix obtained from $\psi_n(\beta)$ by throwing away the $n$-th column and the $n$-th row. Then, the Alexander polynomial of $K$ is given by the  formula
\begin{equation}\label{eq:re-alex-det-bur}
 \Delta_K(t)=t^{\frac{1-n-g(\beta)}2}\det(I_{n-1}-\hat\beta_n)
\end{equation}
where $I_k$ denotes the identity $k\times k$ matrix and $g\colon B_n\to \Z$ is the group homomorphism that sends the Artin generators  to 1.
\end{theorem}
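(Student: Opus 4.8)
The plan is to recognise $I_n-\psi_n(\beta)$ as a square presentation matrix of the Alexander module of $K=\hat\beta$ and then to recover $\Delta_K$ from a single one of its first minors by exploiting the rank-one structure of its adjugate; this also bypasses the more familiar route through the reduced Burau representation and the cyclotomic factor $1+t+\dots+t^{n-1}$.

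First I would write down the Artin presentation of the knot group coming from the braid closure. Letting $\beta_*\colon F_n\to F_n$ denote the Artin action of $\beta$ on the free group on the meridians $x_1,\dots,x_n$, the closure gives $\pi_1(S^3\setminus K)=\langle x_1,\dots,x_n\mid \beta_*(x_i)x_i^{-1},\ i=1,\dots,n\rangle$. Applying Fox calculus to these relators and abelianising by $x_k\mapsto t$ turns the relation matrix into $\pm(I_n-\psi_n(\beta))$ (up to replacing $\beta$ by $\beta^{-1}$ or transposing, according to one's conventions), because the abelianised Fox Jacobian of the Artin action is exactly the unreduced Burau matrix. Hence $I_n-\psi_n(\beta)$ is a square presentation matrix of the Alexander module of $K$, so its first elementary ideal is the principal ideal $(\Delta_K(t))$, defined up to a unit $\pm t^{k}$ of $\Z[t^{\pm1}]$.

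Next I would use that $I_n-\psi_n(\beta)$ is singular with a one-dimensional kernel. Since $K$ is a knot, $\Delta_K(t)\ne 0$, so some $(n-1)\times(n-1)$ minor is non-zero and $\operatorname{rank}(I_n-\psi_n(\beta))=n-1$; together with $(I_n-\psi_n(\beta))\operatorname{adj}(I_n-\psi_n(\beta))=\det(I_n-\psi_n(\beta))\,I_n=0$ this forces $\operatorname{adj}(I_n-\psi_n(\beta))$ to have rank exactly one. Writing it as $u\,\mytr{v}$, with $u$ spanning the kernel and $\mytr{v}$ the left kernel of $I_n-\psi_n(\beta)$, I would invoke the fact that the two invariant lines of the unreduced Burau representation are explicit: up to scalars every $\psi_n(\beta)$ fixes the column vector $(1,\dots,1)^{\top}$ and the row covector $(1,t,\dots,t^{n-1})$. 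Normalising so that $u=(1,\dots,1)^{\top}$ then pins $\mytr{v}=c\,(1,t,\dots,t^{n-1})$ for a single element $c\in\Z[t^{\pm1}]$, so every entry of $\operatorname{adj}(I_n-\psi_n(\beta))$, i.e.\ every first minor of $I_n-\psi_n(\beta)$ up to sign, is $c$ times a power of $t$. Consequently the first elementary ideal is $(c)$, whence $c\doteq\Delta_K(t)$, and deleting the $n$-th row and column gives $\det(I_{n-1}-\hat\beta_n)=\operatorname{adj}(I_n-\psi_n(\beta))_{nn}=t^{\,n-1}c\doteq\Delta_K(t)$.

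What remains, and what I expect to be the main obstacle, is pinning the sign and the exact monomial in \eqref{eq:re-alex-det-bur}. I would fix the normalisation of $\Delta_K$ by $\Delta_K(1)=1$ together with the palindromic symmetry $\Delta_K(t^{-1})\doteq\Delta_K(t)$. Evaluating at $t=1$, the Burau matrix $\psi_n(\beta)$ degenerates to the permutation matrix of the underlying permutation of $\beta$, which is an $n$-cycle since $K$ is connected, and for an $n$-cycle one computes directly that $\det(I_{n-1}-\hat\beta_n)|_{t=1}=1$; this fixes the sign to $+$ and is consistent with $t^{(1-n-g(\beta))/2}|_{t=1}=1$. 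The exponent itself is then forced by the symmetry once one tracks degrees, using $\det\psi_n(\beta)=(-t)^{g(\beta)}$ (each elementary block $\left(\begin{smallmatrix}1-t&t\\1&0\end{smallmatrix}\right)$ has determinant $-t$) and the parity relation $g(\beta)\equiv n-1\pmod 2$, which also guarantees $(1-n-g(\beta))/2\in\Z$. Thus the structural content — that one unreduced first minor already determines $\Delta_K$ up to a unit — is immediate from the rank-one adjugate, and only this last normalisation bookkeeping requires care.
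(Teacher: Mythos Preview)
Your argument is correct and genuinely different from the paper's. The paper proves Theorem~\ref{thm-2} by reducing to the known formula
\[
\Delta_K(t)=(-1)^{n-1}t^{(n-1-g(\beta))/2}\frac{t-1}{t^n-1}\det(\psi_n^r(\beta)-I_{n-1})
\]
for the \emph{reduced} Burau representation (cited from Kassel--Turaev): it introduces an explicit upper-triangular conjugator $C_n$ that block-triangularises $\psi_n(\beta)$ with $\psi_n^r(\beta)$ in the upper-left corner, then proves the identity $(t^{-n}-1)\det(\hat\beta_n-I_{n-1})=(t^{-1}-1)\det(\psi_n^r(\beta)-I_{n-1})$ by a direct row computation (this is the paper's Lemma~\ref{lem-2}), which converts one determinantal formula into the other.

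Your route instead treats $I_n-\psi_n(\beta)$ as the Fox-calculus presentation matrix and exploits that its adjugate is rank one with explicitly known right and left null-vectors $(1,\dots,1)^\top$ and $(1,t,\dots,t^{n-1})$; this immediately shows that \emph{every} first minor is a unit multiple of a single element $c\doteq\Delta_K(t)$, and the $(n,n)$-entry picks out $\det(I_{n-1}-\hat\beta_n)=t^{n-1}c$. This is more self-contained---it does not appeal to the reduced Burau formula at all---and the adjugate observation is an elegant way to see why any single minor suffices. The trade-off is exactly where you locate it: the paper offloads the delicate sign/monomial normalisation to the cited reference, whereas you must establish it by hand. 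Your $t=1$ argument for the sign is fine (for an $n$-cycle permutation matrix $P$, the adjugate of $I_n-P$ has all entries equal, and $\operatorname{tr}\operatorname{adj}(I_n-P)=\tfrac{d}{dx}\det(xI_n-P)\big|_{x=1}=n$ forces each entry to be $+1$). The remaining exponent determination via symmetry and $\det\psi_n(\beta)=(-t)^{g(\beta)}$ is only sketched; it can be completed, for instance, through Squier's hermitian form relating $\psi_n(\beta)(t^{-1})$ to a conjugate of $\psi_n(\beta)(t)^{-\top}$, but you should expect that step to require a concrete computation rather than a one-line degree count.
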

Notice that the exponent of $t$ in the front factor of~\eqref{eq:re-alex-det-bur} is always an integer due to a specific parity property of the number $g(\beta)$. A proof of Theorem~\ref{thm-2}, based on the Alexander--Conway skein relation, is outlined in~\cite{MR1133269}. As an independent proof, we directly relate \eqref{eq:re-alex-det-bur}  to another known determinantal formula for the Alexander polynomial~\cite{MR0375281,MR2435235} that uses the reduced Burau representation and where the correcting multiplicative factor is slightly more complicated.
  
  \subsection*{Outline} Section~\ref{sec:ifha} starts with a concise review of the definition of the universal invariants from~\cite{Kashaev2019}, and then describes the center of $D(\mB_1)$. Remark that the universal invariant takes its values in a certain ``profinite completion'' of this center. Section~\ref{sec-schrodinger} introduces few algebraic and analytic tools used in the subsequent sections: the Hilbert spaces $H^n$ of holomorphic functions on $\C^n$ together with a particular class of elements called Schr\"odinger's coherent states (which are just linear exponential functions), the dense subspaces $A^n\subset H^n$ generated by products of coherent states and polynomials, and the Gaussian integration formula. The latter is the standard tool in quantum field theory which can also be thought of as an analytic version of MacMahon's Master theorem~\cite{MR0141605}. In Section~\ref{sec-repr} the representation of $D(\mB_1)$ in the space $A^1[[\hbar]]$ is introduced, the evaluation of the formal universal R-matrix is shown to be well defined and related to the basic building $2\times2$ matrix of the Burau representation of the braid groups, and the diagrammatic rules for calculation of the Reshetikhin--Turaev functor are specified. The last two Sections~\ref{sec-thm1} and \ref{sec-thm2} contain the proofs of Theorems~\ref{thm-1} and \ref{thm-2} respectively. 
  
  \subsection*{Acknowledgements} I would like to thank Louis-Hadrien Robert and Roland van der Veen for useful discussions.
  This work is partially supported by the Swiss National Science Foundation, the subsidy no~$200020\_192081$.
  
\section{Universal invariants of long knots from Hopf algebras}\label{sec:ifha}
In this section, based on the construction of R-matrix invariants of long knots in~\cite{MR1036112,MR1025161,MR2796628}, we briefly describe the definition of the universal invariants of long knots given in~\cite{Kashaev2019}, see also \cite{MR1324033,MR2251160} for an approach through the co-end. 

Consider the category $\mathbf{Hopf}_\K$ of Hopf algebras over a field $\K$ with invertible antipode. The restricted dual of an algebra provides us with a contravariant endofunctor $(\cdot)^o\colon \mathbf{Hopf}_\K\to \mathbf{Hopf}_\K$
which associates to a Hopf algebra $H$ with multiplication $\nabla$ the Hopf algebra 
\begin{equation}
H^o:=(\nabla^*)^{-1}(H^*\otimes H^*)\subset H^*
\end{equation}
whose underlying vector space is the vector subspace of the algebraic dual $H^*$ generated by all matrix coefficients of all finite dimensional representations of $H$~\cite{MR1786197}.

Drinfeld's quantum double  of $H\in \operatorname{Ob}\mathbf{Hopf}_\K$ (see, for example, \cite{MR1381692})  is a Hopf algebra $D(H)\in \operatorname{Ob}\mathbf{Hopf}_\K$ uniquely determined by the property that there are two Hopf algebra inclusions
\begin{equation}
\imath\colon H\to D(H),\quad \jmath\colon H^{o,\text{op}}\to D(H)
\end{equation}
such that $D(H)$ is generated by their images subject to the commutation relations
\begin{equation}\label{eq:comm-rel-j-i}
\jmath(f)\imath(x)= \langle f_{(1)},x_{(1)}\rangle  \langle f_{(3)},S(x_{(3)})\rangle\imath(x_{(2)})\jmath(f_{(2)})\quad \forall (x,f)\in H\times H^o
\end{equation}
where we use Sweedler's notation for the co-multiplication
\begin{equation}
\Delta(x)=x_{(1)}\otimes x_{(2)},\quad (\Delta\otimes\operatorname{id})(\Delta(x))=x_{(1)}\otimes x_{(2)}\otimes x_{(3)},\ \dots
\end{equation}

The restricted dual of the quantum double $D(H)^o$ is a  dual quasi-triangular Hopf algebra  with the  dual universal  R-matrix
\begin{equation}
\varrho\colon D(H)^o\otimes D(H)^o\to \K,\quad x\otimes y\mapsto \langle x,\jmath(\imath^o(y))\rangle
\end{equation}
which, among other things, satisfies the Yang--Baxter relation
\begin{equation}
\varrho_{1,2}*\varrho_{1,3}*\varrho_{2,3}=\varrho_{2,3}*\varrho_{1,3}*\varrho_{1,2}
\end{equation}
 in the convolution algebra $((D(H)^o)^{\otimes3})^*$. If $\{e_i\}_{i\in I}$ is a linear basis of $H$ and $\{e^i\}_{i\in I}$ is the associated set of canonical (dual) linear forms, then one can write a formal universal R-matrix  
 \begin{equation}\label{eq.un-R-mat}
R:=\sum_{i\in I}\jmath(e^i)\otimes\imath(e_i)
\end{equation}
as the formal conjugate of the dual universal  R-matrix in the sense of the equality
\begin{equation}
\langle x\otimes y,R\rangle=\langle \varrho, x\otimes y\rangle\quad\forall x,y\in D(H)^o.
\end{equation}
Furthermore, for any finite-dimensional right co-module
\begin{equation}
V\to V\otimes D(H)^o,\quad v\mapsto v_{(0)}\otimes v_{(1)},
\end{equation}
the  dual universal  R-matrix gives rise to a rigid R-matrix 
\begin{equation}
r_V\colon V\otimes V\to V\otimes V,\quad u\otimes v\mapsto v_{(0)}\otimes u_{(0)}\langle \varrho,v_{(1)}\otimes u_{(1)}\rangle.
\end{equation}
This implies that there exists a \emph{\color{blue} universal invariant} of long knots $Z_H(K)$ taking its values in the 
center of the convolution algebra $(D(H)^o)^*$ such that 
\begin{equation}
J_{r_V}(K)v=v_{(0)}\langle Z_H(K),v_{(1)}\rangle\quad \forall v\in V
\end{equation}
where $J_{r_V}(K)\in\operatorname{End}(V)$ is the invariant of long knots associated to $r_V$, see \cite{Kashaev2019} for details.

\subsection{The Hopf algebra $D(\mB_1)$ and its center}

 Recall that $\mB_1$ is the polynomial algebra $\C[a^{\pm1},b]$ provided with the structure of a Hopf algebra where $a$ is a  group-like element and the co-product of $b$ is given in~\eqref{eq:coprod-b}.

The opposite $\mB_1^{o,\text{op}}$  of the restricted dual Hopf algebra $\mB_1^o$ is composed of two Hopf sub-algebras: the group algebra $\C[\operatorname{Aff}_1(\C)]$ generated by group-like elements
\begin{equation}
\chi_{u,v},\quad (u,v)\in \C\times\C_{\ne0},\quad \chi_{u,v}\chi_{u',v'}=\chi_{u+vu',vv'},
\end{equation}
and the universal enveloping algebra $U(\operatorname{Lie}\operatorname{Aff}_1(\C))$ generated by
two primitive elements $\psi$ and $\phi$ satisfying the relation~\eqref{eq:cr-lie-alg}.
The relations between the generators of  $\C[\operatorname{Aff}_1(\C)]$ and  $U(\operatorname{Lie}\operatorname{Aff}_1(\C))$ are of the form
 \begin{equation}
 [\chi_{u,v},\psi]=u\phi \chi_{u,v},\quad \chi_{u,v} \phi =v\phi\chi_{u,v}\quad \forall (u,v)\in \C\times\C_{\ne0}
 \end{equation}
where $[x,y]:=xy-yx$.
As linear forms on $\mB_1$, they are defined by the relations
\begin{multline}
\langle\chi_{u,v},b^ma^n\rangle=u^mv^{-m-n},\\
\langle\phi,b^ma^n\rangle=\delta_{m,1},
\quad  \langle\psi,b^ma^n\rangle=\delta_{m,0}n,\quad \forall(m,n)\in\Z_{\ge0}\times \Z.
\end{multline}

The commutation relations~\eqref{eq:comm-rel-j-i} in the case of the quantum double $D(\mB_1)$ take the form
\begin{equation}
[\psi,b]=b,\quad
[\phi,b]=1-a,\quad b\chi_{u,v} =\chi_{u,v}(bv+(a-1)u)\quad \forall (u,v)\in \C\times\C_{\ne0}
\end{equation}
and $a$ is central. The formal universal R-matrix is given by formula~\eqref{eq:un-r-mat}.

Any finite dimensional right co-module $V$ over $D(\mB_1)^o$ is canonically a left module over $D(\mB_1)$
defined by
\begin{equation}
xw=w_{(0)}\langle w_{(1)},x\rangle,\quad \forall (x,w)\in D(\mB_1)\times V,
\end{equation}
 where the elements $a-1$, $b$ and $\phi$ are necessarily  nilpotent, so that the formal infinite double sum in \eqref{eq:un-r-mat}
truncates to a well defined finite sum.
\begin{lemma}
 The center of the algebra $D(\mB_1)$ is the polynomial sub-algebra $\C[a^{\pm1},c]$ where
\begin{equation}\label{eq:cent-el-c}
c:=\phi b+(a-1)\psi.
\end{equation}
\end{lemma}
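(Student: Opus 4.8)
The plan is to prove the two inclusions separately: the inclusion $\C[a^{\pm1},c]\subseteq Z(D(\mB_1))$ by a direct computation, and the reverse inclusion through a Poincar\'e--Birkhoff--Witt argument combined with an associated-graded (Poisson) computation. That $c$ is central is a one-line check from the defining relations: since $a$ is central and $[\phi,\psi]=\phi$, $[\psi,b]=b$, $[\phi,b]=1-a$, one has $[c,b]=(1-a)b+(a-1)b=0$, $[c,\psi]=\phi[b,\psi]+[\phi,\psi]b=-\phi b+\phi b=0$, $[c,\phi]=\phi(a-1)+(a-1)(-\phi)=0$, and, using $\chi_{u,v}\phi=v\phi\chi_{u,v}$, $b\chi_{u,v}=\chi_{u,v}(bv+(a-1)u)$, $[\chi_{u,v},\psi]=u\phi\chi_{u,v}$, also $[c,\chi_{u,v}]=0$; as $a$ and $c$ commute this gives $\C[a^{\pm1},c]\subseteq Z(D(\mB_1))$, and it will emerge a posteriori that $c$ is transcendental over $\C[a^{\pm1}]$, so that this subalgebra is a genuine polynomial ring.

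For the reverse inclusion I would work with the subalgebra $D'\subseteq D(\mB_1)$ generated by $a^{\pm1},b,\phi,\psi$. It is the iterated Ore extension over $R:=\C[a^{\pm1}]$ obtained by adjoining $\phi$, then $b$ with $b\phi=\phi b+(a-1)$, then $\psi$ with $\psi\phi=\phi\psi-\phi$ and $\psi b=b\psi+b$; thus $D'$ is a domain, free over $R$ on the PBW basis $\{\phi^i b^j\psi^k\}$, and the filtration by total degree in $\phi,b,\psi$ has associated graded ring $\operatorname{gr}D'=R[\bar\phi,\bar b,\bar\psi]$ --- a polynomial domain, since every correction term in the relations has strictly smaller degree --- carrying the induced Poisson bracket $\{\bar\psi,\bar b\}=\bar b$, $\{\bar\psi,\bar\phi\}=-\bar\phi$, $\{\bar\phi,\bar b\}=0$ (with $R$ Poisson-central). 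The first step is to reduce $Z(D(\mB_1))$ to $Z(D')$: the algebra $D(\mB_1)$ is free as a left $D'$-module on the group-likes $\{\chi_g\}_{g\in\operatorname{Aff}_1(\C)}$, which normalise $D'$ (conjugation by $\chi_{u,v}$ sends $\phi\mapsto v\phi$, $b\mapsto v^{-1}(b-u(a-1))$, $\psi\mapsto\psi+u\phi$ and fixes $a$), so a central element $z=\sum_g d_g\chi_g$ with $d_g\in D'$ must commute with $\phi$ and with $b$; comparing leading symbols in the domain $\operatorname{gr}D'$, the relation $\phi d_g=v\,d_g\phi$ forces $d_g=0$ whenever $v\ne1$, while $[b,d_g]=-u(a-1)d_g$, together with the local nilpotence of $\operatorname{ad}_b$ on $D'$ (it is nilpotent on the generators) and the fact that $D'$ is a domain, forces $d_g=0$ whenever $u\ne0$. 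Hence $z=d_e\in Z(D')$, so $Z(D(\mB_1))=Z(D')$.

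It remains to compute $Z(D')$, and the associated-graded picture does this cleanly. The Poisson center of $\operatorname{gr}D'=R[\bar\phi,\bar b,\bar\psi]$ consists of the $P$ with $\partial_{\bar\psi}P=0$ and $\bar\phi\,\partial_{\bar\phi}P=\bar b\,\partial_{\bar b}P$, i.e.\ of the invariants of the scaling flow $(\bar\phi,\bar b)\mapsto(t\bar\phi,t^{-1}\bar b)$ generated by $\bar\psi$; this is $R[\bar\phi\bar b]$. If $0\ne z\in Z(D')$ has filtration degree $n$, its leading symbol is Poisson-central (the commutator of a degree-$n$ and a degree-$m$ element lies in degree $n+m-1$, and here it vanishes), hence equals $g(a)(\bar\phi\bar b)^{n/2}$ for some $g\in R$; since $\sigma_2(c)=\bar\phi\bar b$, the element $z-g(a)c^{n/2}$ is again central and of degree $<n$, so descending induction on $n$, with base case $Z(D')\cap R=R$, gives $z\in R[c]$. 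Thus $Z(D')=R[c]$; the relation $\sigma_{2m}(c^m)=(\bar\phi\bar b)^m$ shows at the same time that $c$ is transcendental over $R$, and so $Z(D(\mB_1))=\C[a^{\pm1},c]$ as claimed.

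I expect the main obstacle to be the structural input of the second paragraph rather than the Poisson computation, which is routine one-variable invariant theory: one must carefully verify the Ore-extension presentation of $D'$, the freeness of $D(\mB_1)$ as a left $D'$-module on the $\chi_g$ with the stated conjugation action, and the local nilpotence of $\operatorname{ad}_b$. Once the reduction $Z(D(\mB_1))=Z(D')$ is in hand, the graded argument finishes things; alternatively that argument can be replaced by an elementary but more computational one --- use $[z,\psi]=0$ and the exact identity $[\phi^i b^j\psi^k,\psi]=(i-j)\phi^i b^j\psi^k$ to confine $z$ to the $R$-span of $\{\phi^i b^i\psi^k\}$, which equals the $R$-span of $\{c^i\psi^k\}$, and then use $[z,b]=0$ with $\psi^k b=b(\psi+1)^k$ to see that $z$, viewed as a polynomial in $\psi$ over the commutative domain $R[c]$, is fixed by $\psi\mapsto\psi+1$, hence constant in $\psi$.
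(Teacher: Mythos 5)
Your proposal is correct, but it takes a genuinely different route from the paper's. The paper works with all of $D(\mB_1)$ at once: it asserts a unique normal form $x=\sum_{u,v,m}\chi_{u,v}\,e_m\,p_{u,v,m}(a,c,\psi)$ (with $e_m$ a power of $b$ or of $\phi$), kills all terms with $u\ne0$ or $m\ne0$ by conjugating with the one-parameter family $\chi_{0,s}$, and then kills the $v\ne1$ terms and the $\psi$-dependence by commuting with $b$, via $p_{0,v,0}(a,c,\psi+1)=v^{-1}p_{0,v,0}(a,c,\psi)$. You instead first strip off the group-likes --- writing $z=\sum_g d_g\chi_g$ over the subalgebra $D'$ and using $\phi d_g=v\,d_g\phi$ (leading symbols in the domain $\operatorname{gr}D'$ for $v\ne1$) and $[b,d_g]=-u(a-1)d_g$ together with local nilpotence of $\operatorname{ad}_b$ (for $u\ne0$) --- and then compute $Z(D')$ by the filtered/graded Poisson-center argument; your ``elementary alternative'' ending ($\operatorname{ad}_\psi$-weights, then invariance under $\psi\mapsto\psi+1$ forced by $[z,b]=0$) is essentially the paper's second half transplanted into $D'$. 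Both proofs rest on an unproved but standard structural input of comparable strength: the paper's uniqueness of its normal form is equivalent to the PBW/freeness facts you flag (the basis $\{\phi^ib^j\psi^k\}$ of $D'$ over $\C[a^{\pm1}]$ and the freeness of $D(\mB_1)$ as a left $D'$-module on the $\chi_g$), all consequences of the factorization $D(H)\cong H^{o,\mathrm{op}}\otimes H$ and of the description of $\mB_1^o$ in terms of $U(\operatorname{Lie}\operatorname{Aff}_1(\C))$ and $\C[\operatorname{Aff}_1(\C)]$, so you are not assuming more than the paper does. What your route buys: the graded argument is robust, and the relation $\sigma(c^m)=(\bar\phi\bar b)^m$ shows that $c$ is transcendental over $\C[a^{\pm1}]$, i.e.\ that the center is genuinely a polynomial ring --- a point the paper leaves implicit in its normal form; note only that your intermediate assertion $Z(D(\mB_1))=Z(D')$ is, at that stage, an inclusion $Z(D(\mB_1))\subseteq Z(D')$, the equality following once $Z(D')=\C[a^{\pm1},c]$ is combined with the centrality of $c$. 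What the paper's route buys is brevity: one normal-form manipulation with no filtration machinery.
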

\begin{proof}
 It is easily verified that $c$ is central. Any element $x\in D(\mB_1)$ can  uniquely be written in the form
\begin{equation}
x=\sum_{(u,v,m)\in \C\times \C_{\ne0}\times \Z}\chi_{u,v} e_m p_{u,v,m}(a,c,\psi),
\end{equation}
where
 \begin{equation}
e_m:=\left\{
\begin{array}{cc}
 b^m &  \text{if }\  m>0;   \\
 1 &  \text{if }\  m=0;    \\
 \phi^{-m} & \text{if }\  m<0
\end{array}
\right.
\end{equation}
and $ p_{u,v,m}(a,c,\psi)\in\C[a^{\pm1},c,\psi]$ is non-zero for only finitely many triples $(u,v,m)$.

Assume that $x$ is central.  Then, for any $s\in\C_{\ne0}$, we have the equality
\begin{multline}
x=\chi_{0,s}^{-1} x \chi_{0,s}=\sum_{(u,v,m)\in \C\times \C_{\ne0}\times \Z}\chi_{u/s,v} e_m s^m p_{u,v,m}(a,c,\psi)\\
=\sum_{(u,v,m)\in \C\times \C_{\ne0}\times \Z}\chi_{u,v} e_m s^m p_{us,v,m}(a,c,\psi)
\end{multline}
which implies that for any fixed triple $(u,v,m)\in  \C\times \C_{\ne0}\times \Z$, one has the family of equalities
\begin{equation}
p_{u,v,m}=s^m p_{us,v,m}\quad \forall s\in\C_{\ne0}.
\end{equation}
This means that $p_{u,v,m}$ can only be non-zero if $u=m=0$. Thus, the element $x$ takes the form
\begin{equation}
x=\sum_{v\in\C_{\ne0}}\chi_{0,v} p_{0,v,0}(a,c,\psi).
\end{equation}
The equality
 \begin{equation}
bx=xb=b\sum_{v\in\C_{\ne0}}\chi_{0,v} v^{-1}p_{0,v,0}(a,c,\psi+1).
\end{equation}
is equivalent to the equalities
\begin{equation}
 p_{0,v,0}(a,c,\psi+1)=v^{-1}p_{0,v,0}(a,c,\psi) \quad \forall v\in \C_{\ne0}
\end{equation}
which imply that the polynomial $p_{0,v,0}(a,c,\psi)$ can be non-zero only if $v=1$ and if it does not depend on $\psi$. We conclude that $x\in \C[a^{\pm1},c]$.
\end{proof}
\section{Schr\"odinger's coherent states} \label{sec-schrodinger}
Here we briefly review the theory of standard Schr\"odinger's coherent states (see for example \cite{MR858831}).  

For any $n\in\Z_{>0}$, let  $H^n\subset L^2(\C^n,\mu_n)$ be the complex Hilbert space of square integrable holomorphic functions $f\colon \C^n \to \C$ with the scalar product
\begin{equation}
\langle f\vert g\rangle :=\int_{\C^n}\overline{f(z)} g(z)\operatorname{d}\!\mu_n(z)
\end{equation}
where the measure $\mu_n$ on $\C^n$ is absolutely continuous with respect to the Lebesgue measure $\lambda_{2n}$ on $\C^n\simeq \R^{2n}$ with the Radon--Nikodym derivative
\begin{equation}
\frac{\operatorname{d}\!\mu_n}{\operatorname{d}\!\lambda_{2n}}(z)=\frac1{\pi^n} e^{-\|z\|^2},\quad \|z\|:=\sqrt{\sum\nolimits_{i=0}\nolimits^{n-1}|z_i|^2}.
\end{equation}
By direct calculation, one verifies that the monomials
\begin{equation}
e_k(z):=\prod_{i=0}^{n-1}\frac{z_i^{k_i}}{\sqrt{k_i!}},\quad k\in \Z_{\ge0}^n
\end{equation}
form an orthonormal family in $H^n$ which is, in fact, a Hilbert basis due to the validity of  Taylor's  (multivariable) expansion for holomorphic functions:
\begin{equation}\label{eq:taylor-exp}
f(z)=\sum_{k\in \Z_{\ge0}^n}\prod_{i=0}^{n-1}\frac{z_i^{k_i}}{k_i!}\frac{\partial^{k_i}f(w)}{\partial w_i^{k_i}}\biggr\vert_{w=0}=\sum_{k\in \Z_{\ge0}^n}e_k(z)\prod_{i=0}^{n-1}\frac{1}{\sqrt{k_i!}}\frac{\partial^{k_i}f(w)}{\partial w_i^{k_i}}\biggr\vert_{w=0}
\end{equation}
which, in the case when $f\in H^n$, implies that 
\begin{equation}\label{eq:fourier-coeff-hol}
\int_{\C^n}\overline {e_k(z)}f(z)\operatorname{d}\!\mu_n(z)=\prod_{i=0}^{n-1}\frac{1}{\sqrt{k_i!}}\frac{\partial^{k_i}f(w)}{\partial w_i^{k_i}}\biggr\vert_{w=0}\quad \forall k\in \Z_{\ge0}^n.
\end{equation}
For any $u\in\C^n$, multiplying both sides of~\eqref{eq:fourier-coeff-hol} by $e_k(u)$, summing over all $k\in  \Z_{\ge0}^n$ and, using the Fubini (or dominant convergence) theorem in the left hand side for exchanging the integration and summation,  and the Taylor formula~\eqref{eq:taylor-exp} in the right hand side, we obtain
\begin{equation}\label{eq:rep-prop-cs}
\int_{\C^n}\varphi_u(\bar z)f(z)\operatorname{d}\!\mu_n(z)=f(u)\quad \forall f\in H^n
\end{equation}
where the holomorphic function
\begin{equation}
\varphi_u\colon \C^n\to\C,\quad z\mapsto \sum_{k\in  \Z_{\ge0}^n}e_k(u)e_k(z)=e^{\sum_{i=0}^{n-1}u_i z_i}
\end{equation}
determines an element
 $
\varphi_u\in H^n
$
called  \emph{\color{blue} (Schr\"odinger's) coherent state}.
By treating elements of $\C^n$ as column vectors we can write $\varphi_u(z)=e^{\mytr{u}z}$. Let us also adopt the notation $w^*:=\mytr{\bar w}$ for the Hermitian conjugation, i.e. the transposition combined with the complex conjugation. With this notation we have the equalities
\begin{equation}
\|w\|^2=w^*w,\quad \varphi_u(\bar z)=e^{z^*u}.
\end{equation}

The integral formula~\eqref{eq:rep-prop-cs} expresses the reproducing property of coherent states
\begin{equation}
\langle \varphi_{\bar u}\vert f\rangle=f(u)\quad \forall (f,u)\in H^n\times\C^n.
\end{equation}
The choice $f=\varphi_v$ in the last formula gives the scalar product between the coherent states
\begin{equation}\label{eq:sc-pr-coh-st}
 \langle \varphi_{\bar u} | \varphi_v\rangle=\varphi_{v}(u)=\varphi_u(v)=e^{\mytr{u}v}.
\end{equation}
In particular, the norm of a coherent state $\varphi_v$ is determined by the Euclidean norm  of $v$ through the formula
\begin{equation}
\|\varphi_v\|=e^{\|v\|^2/2}.
\end{equation}

\subsection{A dense subspace of $H^n$}
Another useful property of the coherent states is that the (dense) vector subspace  $A^n$ of $H^n$ generated by products of coherent states and polynomials  is stable under the multiplication of elements of $A^n$ as functions so that $A^n$ carries the additional structure of a commutative algebra, and it is in the domain of any linear differential operator with coefficients in $A^n$.
For example, when $n=1$, the Hilbert basis of $H^1$ given by the monomials $\{e_k\}_{k\in\Z_{\ge0}}\subset A^1$ is the eigenvector basis of the 1-dimensional quantum harmonic oscillator with the (self-adjoint) Hamiltonian operator $z\frac{\partial}{\partial z}$. 

\subsection{Gaussian integration formula}
Writing out explicitly the scalar product as an integral in~\eqref{eq:sc-pr-coh-st}, we obtain an integral identity
\begin{equation}
\int_{\C^n} e^{\mytr{v}z+z^*u}\operatorname{d}\!\mu_n(z)=e^{\mytr{v}u}
\end{equation}
which is a special case of the general Gaussian integration formula
\begin{equation}\label{eq:gauss-int-form}
\int_{\C^n} e^{v^*z+z^*u + z^*Mz}\operatorname{d}\!\mu_n(z)=\frac{e^{v^*W^{-1}u}}{\det(W)},\quad W:=I_n-M
\end{equation}
where $M$ is an arbitrary complex $n$-by-$n$ matrix sufficiently close to zero so that the integral is absolutely convergent. Furthermore, the expansion  of~\eqref{eq:gauss-int-form} in power series in $M$ with $u=v=0$ corresponds to the purely combinatorial MacMahon Master theorem~\cite{MR0141605}.


\section{Representations of $D(\mB_1)$ in $A^1[[\hbar]]$}\label{sec-repr}

Recall that $A^1$ is the vector subspace of $H^1$ generated by products of coherent states with polynomials. For any $\lambda\in \C$, the mappings 
 \begin{equation}\label{eq:rho-hom}
a\mapsto 1+\hbar,\quad b\mapsto \frac{\partial}{\partial z},\quad \phi\mapsto \hbar z,\quad \psi\mapsto \lambda-z\frac{\partial}{\partial z}
\end{equation}
and the action
\begin{equation}
\chi_{u,v}f(z)=e^{\hbar uz}f(vz)
\end{equation}
determine a homomorphism of algebras 
\begin{equation}\label{eq:rst-rep}
\rho_\lambda\colon D(\mB_1)\to \operatorname{End}(A^1[[\hbar]])
\end{equation}
 which sends the central element $c$ defined in \eqref{eq:cent-el-c} to $\lambda\hbar$. 

An important property of the representation $\rho_\lambda$ is that the image under $ \rho_\lambda^{\otimes 2}$ of the formal R-matrix~\eqref{eq:un-r-mat} is a well defined element of the algebra
$ \operatorname{End}(A^1)^{\otimes 2}[[\hbar]]$:

\begin{equation}\label{eq:eval-r-mat}
 \rho_\lambda^{\otimes 2}(R)=(1+\hbar)^{\lambda-z_0\frac{\partial}{\partial z_0}}e^{\hbar z_0\frac{\partial}{\partial z_1}}=\sum_{m,n\ge0}\frac{\hbar^{m+n}}{n!}\binom{\lambda-z_0\frac{\partial}{\partial z_0}}{m} \Big(z_0\frac{\partial}{\partial z_1}\Big)^n.
\end{equation}
In particular, the double sum in~\eqref{eq:eval-r-mat} truncates to a finite sum if the indeterminate $\hbar$ is nilpotent. 
Thus, despite the fact that the representation $\rho_\lambda$ is infinite dimensional, the corresponding R-matrix is well suited for calculation of the image under $\rho_\lambda$ of the universal invariant $Z_{\mB_1}(K)$. Moreover, as the parameter $\lambda$ enters only through the overall normalisation factor $(1+\hbar)^\lambda$ of the R-matrix, the associated invariant  is independent of $\lambda$. For that reason, in what follows, we put $\lambda=0$ and work only with the representation $\rho:=\rho_0$.

In order to apply the construction of~\cite{Kashaev2019}, we define the input R-matrix
\begin{equation}
 r:=\rho^{\otimes 2}(R)P
\end{equation}
where $P\in \operatorname{Aut}(A^2)$ is the permutation operator acting by exchanging the arguments.
 By using~\eqref{eq:eval-r-mat}, we obtain the following explicit action of $r$:
\begin{multline}
rf(z)= rf(z_0,z_1)=(1+\hbar)^{-z_0\frac{\partial}{\partial z_0}}f(z_1+\hbar z_0,z_0)\\
 =f\Big(z_1+\frac{\hbar}{1+\hbar}z_0,\frac{1}{1+\hbar}z_0\Big)
 =f(\mytr{U}z)
\end{multline}
where 
\begin{equation}\label{eq:U-matrix}
U :=\left(\begin{matrix}
 \frac{\hbar}{1+\hbar}&\frac1{1+\hbar}\\
 1&0
\end{matrix}\right)=\left(\begin{matrix}
 1-t&t\\
 1&0
\end{matrix}\right),\quad t:=\frac1{1+\hbar},
\end{equation}
is the 2-by-2 matrix entering the definition of the (unrestricted) Burau representation of the braid groups~\cite{MR3069652}.  
The action of $r$ on the coherent states is realized by the action of the transposed matrix on the space of parameters: 
\begin{equation}\label{eq:r-acts-on-phi}
r\varphi_{v}(z)=\varphi_v(\mytr{U} z) =\varphi_{U v}(z).
\end{equation}
In what follows, we use the indeterminate $t$ defined in terms of $\hbar$ through the formula in~\eqref{eq:U-matrix}.

\subsection{The diagrammatic rules for the Reshetikhin--Turaev functor}
From the formula~\eqref{eq:r-acts-on-phi}, one calculates the integral kernel of $r$ with respect to the coherent states
\begin{equation}
\langle \varphi_{w}|r|\varphi_{v}\rangle=\langle \varphi_{w_0,w_1}|r|\varphi_{v_0,v_1}\rangle=e^{w^*U v}
\end{equation}
which corresponds to the value of the Reshetikhin--Turaev functor associated to positive crossings of all orientations in normal long knot diagrams with edges coloured by complex numbers:
\begin{equation}
\begin{tikzpicture}[scale=1,baseline=10]
\draw[thick,<-] (0,1) to [out=-90,in=90] (1,0);
\draw[line width=3pt,white] (1,1) to [out=-90,in=90] (0,0);
\draw[thick,<-] (1,1) to [out=-90,in=90] (0,0);
\node (sw) at (0,-.1){\tiny $v_0$};\node (se) at (1,-.1){\tiny $v_1$};
\node (nw) at (0,1.1){\tiny $w_0$};\node (ne) at (1,1.1){\tiny $w_1$};
\end{tikzpicture},
\begin{tikzpicture}[scale=1,baseline=10]
\draw[thick,->] (1,1) to [out=-90,in=90] (0,0);
\draw[line width=3pt,white] (0,1) to [out=-90,in=90] (1,0);
\draw[thick,<-] (0,1) to [out=-90,in=90] (1,0);
\node (sw) at (0,-.1){\tiny $w_0$};\node (se) at (1,-.1){\tiny $v_0$};
\node (nw) at (0,1.1){\tiny $w_1$};\node (ne) at (1,1.1){\tiny $v_1$};
\end{tikzpicture},
\begin{tikzpicture}[scale=1,baseline=10]
\draw[thick,->] (0,1) to [out=-90,in=90] (1,0);
\draw[line width=3pt,white] (1,1) to [out=-90,in=90] (0,0);
\draw[thick,->] (1,1) to [out=-90,in=90] (0,0);
\node (sw) at (0,-.1){\tiny $w_1$};\node (se) at (1,-.1){\tiny $w_0$};
\node (nw) at (0,1.1){\tiny $v_1$};\node (ne) at (1,1.1){\tiny $v_0$};
\end{tikzpicture},
\begin{tikzpicture}[scale=1,baseline=10]
\draw[thick,<-] (1,1) to [out=-90,in=90] (0,0);
\draw[line width=3pt,white] (0,1) to [out=-90,in=90] (1,0);
\draw[thick,->] (0,1) to [out=-90,in=90] (1,0);
\node (sw) at (0,-.1){\tiny $v_1$};\node (se) at (1,-.1){\tiny $w_1$};
\node (nw) at (0,1.1){\tiny $v_0$};\node (ne) at (1,1.1){\tiny $w_0$};
\end{tikzpicture}\  \xmapsto{RT_r}\  \langle \varphi_{w_0,w_1}|r|\varphi_{v_0,v_1}\rangle
\end{equation}
Likewise, the integral kernel of $r^{-1}$ given by the formula
\begin{equation}
\langle \varphi_{w}|r^{-1}|\varphi_{v}\rangle=\langle \varphi_{w_0,w_1}|r^{-1}|\varphi_{v_0,v_1}\rangle=e^{w^*U^{-1} v}
\end{equation}
is associated to negative crossings of all orientations:
\begin{equation}
\begin{tikzpicture}[scale=1,baseline=10]
\draw[thick,<-] (1,1) to [out=-90,in=90] (0,0);
\draw[line width=3pt,white] (0,1) to [out=-90,in=90] (1,0);
\draw[thick,<-] (0,1) to [out=-90,in=90] (1,0);
\node (sw) at (0,-.1){\tiny $v_0$};\node (se) at (1,-.1){\tiny $v_1$};
\node (nw) at (0,1.1){\tiny $w_0$};\node (ne) at (1,1.1){\tiny $w_1$};
\end{tikzpicture},
\begin{tikzpicture}[scale=1,baseline=10]
\draw[thick,<-] (0,1) to [out=-90,in=90] (1,0);
\draw[line width=3pt,white] (1,1) to [out=-90,in=90] (0,0);
\draw[thick,->] (1,1) to [out=-90,in=90] (0,0);
\node (sw) at (0,-.1){\tiny $w_0$};\node (se) at (1,-.1){\tiny $v_0$};
\node (nw) at (0,1.1){\tiny $w_1$};\node (ne) at (1,1.1){\tiny $v_1$};
\end{tikzpicture},
\begin{tikzpicture}[scale=1,baseline=10]
\draw[thick,->] (1,1) to [out=-90,in=90] (0,0);
\draw[line width=3pt,white] (0,1) to [out=-90,in=90] (1,0);
\draw[thick,->] (0,1) to [out=-90,in=90] (1,0);
\node (sw) at (0,-.1){\tiny $w_1$};\node (se) at (1,-.1){\tiny $w_0$};
\node (nw) at (0,1.1){\tiny $v_1$};\node (ne) at (1,1.1){\tiny $v_0$};
\end{tikzpicture}, 
\begin{tikzpicture}[scale=1,baseline=10]
\draw[thick,->] (0,1) to [out=-90,in=90] (1,0);
\draw[line width=3pt,white] (1,1) to [out=-90,in=90] (0,0);
\draw[thick,<-] (1,1) to [out=-90,in=90] (0,0);
\node (sw) at (0,-.1){\tiny $v_1$};\node (se) at (1,-.1){\tiny $w_1$};
\node (nw) at (0,1.1){\tiny $v_0$};\node (ne) at (1,1.1){\tiny $w_0$};
\end{tikzpicture}\ \xmapsto{RT_r}\  \langle \varphi_{w_0,w_1}|r^{-1}|\varphi_{v_0,v_1}\rangle.
\end{equation}
We complete the list of the diagrammatic rules by adding the rules for vertical segments and  local extrema 
\begin{equation}
\begin{tikzpicture}[yscale=.5,baseline]
\draw[thick,->] (0,0) to [out=90,in=-90] (0,1);
\node (n) at (0,1.2){\tiny $w$};\node (s) at (0,-.2){\tiny $v$};
\end{tikzpicture}\ ,
\begin{tikzpicture}[yscale=.5,baseline]
\draw[thick,<-] (0,0) to [out=90,in=-90] (0,1);
\node (n) at (0,1.2){\tiny $v$};\node (s) at (0,-.2){\tiny $w$};
\end{tikzpicture}\ ,
\begin{tikzpicture}[xscale=1,baseline=0]
\draw[thick,<-] (0,0) to [out=90,in=90] (1,0);
\node (w) at (0,-.1){\tiny $w$};\node (e) at (1,-.1){\tiny $v$};
\end{tikzpicture}\ ,
\begin{tikzpicture}[xscale=1,baseline=25]
\draw[thick,<-] (0,1) to [out=-90,in=-90] (1,1);
\node (w) at (0,1.1){\tiny $w$};\node (e) at (1,1.1){\tiny $v$};
\end{tikzpicture}\ \xmapsto{RT_r} \ e^{\bar w v}
\end{equation}
where $e^{\bar w v}$ is the integral kernel of the  identity operator $\operatorname{id}_{A^1}$:
\begin{equation}
\langle \varphi_{w}|\operatorname{id}_{A^1}|\varphi_{v}\rangle=\langle \varphi_{w}|\varphi_{v}\rangle=e^{\bar w v}.
\end{equation}

For later use, we calculate the following two Reshetikhin--Turaev images
\begin{multline}\label{eq:+cup}
\langle\varphi_w|RT_r\Big(
\begin{tikzpicture}[yscale=1,baseline=18]
\coordinate (a1) at (0,1);
\coordinate (a2) at (.75,.5);
\coordinate (a3) at (0.25,.5);
\coordinate (a4) at (1,1);
\draw[thick] (a2) to [out=-90,in=-90] (a3);
\draw[thick,->] (a3) to [out=90,in=-135] (a4);
\draw[line width=3pt,white] (a1) to [out=-45,in=90] (a2);
\draw[thick] (a1) to [out=-45,in=90] (a2);
\end{tikzpicture}
\Big)|\varphi_v\rangle=
RT_r\Big(
\begin{tikzpicture}[yscale=1,baseline=18]
\coordinate (a1) at (0,1);
\coordinate (a2) at (.75,.5);
\coordinate (a3) at (0.25,.5);
\coordinate (a4) at (1,1);
\node at (0,1.1) {\tiny$v$};
\node at (1,1.1) {\tiny$w$};
\draw[thick] (a2) to [out=-90,in=-90] (a3);
\draw[thick,->] (a3) to [out=90,in=-135] (a4);
\draw[line width=3pt,white] (a1) to [out=-45,in=90] (a2);
\draw[thick] (a1) to [out=-45,in=90] (a2);
\end{tikzpicture}
\Big)
=\int_{\C}\langle\varphi_{w,u}|r|\varphi_{v,u}\rangle\operatorname{d}\!\mu_1(u)\\
=\int_{\C}e^{\left(\begin{smallmatrix}
 \bar w&\bar u 
\end{smallmatrix}\right)
\left(\begin{smallmatrix}
 1-t&t\\
 1&0
\end{smallmatrix}\right)
\left(\begin{smallmatrix}
 v\\
 u
\end{smallmatrix}\right)
}\operatorname{d}\!\mu_1(u)
=\int_{\C}e^{
\bar w(1-t)v+\bar w tu+\bar uv
}\operatorname{d}\!\mu_1(u)=e^{\bar wv}
\end{multline}
and 
\begin{multline}\label{eq:-cup}
\langle\varphi_w|RT_r\Big(
\begin{tikzpicture}[yscale=1,baseline=18]
\coordinate (a1) at (0,1);
\coordinate (a2) at (.75,.5);
\coordinate (a3) at (0.25,.5);
\coordinate (a4) at (1,1);
\draw[thick] (a1) to [out=-45,in=90] (a2);
\draw[thick] (a2) to [out=-90,in=-90] (a3);
\draw[line width=3pt,white] (a3) to [out=90,in=-135] (a4);
\draw[thick,->] (a3) to [out=90,in=-135] (a4);
\end{tikzpicture}
\Big)|\varphi_v\rangle=
RT_r\Big(
\begin{tikzpicture}[yscale=1,baseline=18]
\coordinate (a1) at (0,1);
\coordinate (a2) at (.75,.5);
\coordinate (a3) at (0.25,.5);
\coordinate (a4) at (1,1);
\node at (0,1.1) {\tiny$v$};
\node at (1,1.1) {\tiny$w$};
\draw[thick] (a1) to [out=-45,in=90] (a2);
\draw[thick] (a2) to [out=-90,in=-90] (a3);
\draw[line width=3pt,white] (a3) to [out=90,in=-135] (a4);
\draw[thick,->] (a3) to [out=90,in=-135] (a4);
\end{tikzpicture}
\Big)
=\int_{\C}\langle\varphi_{w,u}|r^{-1}|\varphi_{v,u}\rangle\operatorname{d}\!\mu_1(u)\\
=\int_{\C}e^{\left(\begin{smallmatrix}
 \bar w&\bar u 
\end{smallmatrix}\right)
\left(\begin{smallmatrix}
 0&1\\
 t^{-1}&1-t^{-1}
\end{smallmatrix}\right)
\left(\begin{smallmatrix}
 v\\
 u
\end{smallmatrix}\right)
}\operatorname{d}\!\mu_1(u)
=\int_{\C}e^{
\bar w u+\bar ut^{-1}v+\bar u(1-t^{-1})u
}\operatorname{d}\!\mu_1(u)=te^{\bar wv}
\end{multline}
where the integrals are calculated by using the Gaussian integration formula~\eqref{eq:gauss-int-form}.
\section{Proof of Theorem~\ref{thm-1}}\label{sec-thm1}
Let $K$ be represented by the closure of a braid $\beta\in B_n$.
Let us choose a normal long knot diagram $D_\beta$ representing $K$ according to the picture 
\begin{equation}\label{pic:long-knot}
D_\beta=
\begin{tikzpicture}[baseline=-2]
 \node (a)  [hvector] at (0,0) {$D_\beta$};
 \draw[thick,->] (a)--(0,0.5);
  \draw[thick] (a)--(0,-0.5);
\end{tikzpicture}
=\ 
\begin{tikzpicture}[yscale=.5,baseline]
\draw [
hvector](.3,0) rectangle (1.7,1);
\coordinate (a1) at (-.4,.5);
\coordinate (a2) at (.4,-.7);
\coordinate (a3) at (0.1,-.7);
\node (center) at (1,.5){ $\beta$};
\node (udots) at (.8,1.1){\tiny$\ldots$};
\node (bdots) at (.8,-.1){\tiny$\ldots$};
\node (cdots) at (-.7,.5){\tiny$\ldots$};
\draw[thick,->] (.5,1) to [out=120,in=90] (a1);
\draw[thick] (a2) to [out=-90,in=-90] (a3);
\draw[thick] (a3) to [out=90,in=-120] (0.5,0);
\draw[line width=3pt,white]  (a1) to [out=-90,in=90] (a2);
\draw[thick] (a1) to [out=-90,in=90] (a2);

\coordinate (b1) at (-1,.5);
\coordinate (b2) at (.6,-2);
\coordinate (b3) at (0.3,-2);
\draw[thick,->] (1.3,1) to [out=120,in=90] (b1);
\draw[thick] (b2) to [out=-90,in=-90] (b3);
\draw[thick] (b3) to [out=90,in=-120] (1.3,0);
\draw[line width=3pt,white] (b1) to [out=-90,in=90] (b2);
\draw[thick] (b1) to [out=-90,in=90] (b2);

\draw[thick,->] (1.5,1) to [out=90,in=-90] (1.5,1.5);
\draw[thick] (1.5,-2) to [out=90,in=-90] (1.5,0);

\end{tikzpicture}
\end{equation}
with the writhe $g(D_\beta)=g(\beta)+n-1$ which is an even number.
Taking into account the value~\eqref{eq:+cup},
writing the matrix $\psi_n(\beta)$ in the block form
\begin{equation}
\psi_n(\beta)=
\begin{pmatrix}
 \hat\beta_n&b_\beta\\
 c_\beta& d_\beta
\end{pmatrix},
\end{equation}
and using the general Gaussian integration formula~\eqref{eq:gauss-int-form}, we calculate
\begin{multline}\label{eq:rt_r-d_beta}
\langle \varphi_w| RT_r(D_\beta)| \varphi_v\rangle=RT_r\bigg(\begin{tikzpicture}[baseline=-3]
 \node (a)  [hvector] at (0,0) {$D_\beta$};
 \node (up) [] at (0,0.65){\tiny$w$};
  \node (do) [] at (0,-0.6){\tiny$v$};
 \draw[thick,->] (a)--(up);
  \draw[thick] (a)--(do);
\end{tikzpicture}\bigg)
=\int_{\C^{n-1}}e^{\left(\begin{smallmatrix}
 u^*&\bar w 
\end{smallmatrix}\right)\psi_n(\beta)\left(\begin{smallmatrix}
 u\\
 v 
\end{smallmatrix}\right)}\operatorname{d}\!\mu_{n-1}(u)\\
=\int_{\C^{n-1}}e^{\left(\begin{smallmatrix}
 u^*&\bar w 
\end{smallmatrix}\right)\left(\begin{smallmatrix}
 \hat\beta_n&b_\beta\\
 c_\beta& d_\beta
\end{smallmatrix}\right)\left(\begin{smallmatrix}
 u\\
 v 
\end{smallmatrix}\right)}\operatorname{d}\!\mu_{n-1}(u)\\
=
\int_{\C^{n-1}}e^{\bar wd_\beta v+\bar wc_\beta u+u^*b_\beta v+u^*\hat\beta_n u}\operatorname{d}\!\mu_{n-1}(u)
=\frac{e^{\bar wd_\beta v+\bar wc_\beta(I_{n-1}-\hat\beta_n)^{-1}b_\beta v}}{\det(I_{n-1}-\hat\beta_n)}.
\end{multline}
On the other hand, given the fact that we are calculating a central element realised by a scalar so that on \`a priori grounds the result should be proportional to the integral kernel of the identity operator $e^{\bar w v}$, we conclude that the identity
\begin{equation}
d_\beta+c_\beta(I_{n-1}-\hat\beta_n)^{-1}b_\beta=1
\end{equation}
is satisfied, a property of $\psi_n(\beta)$ which does not look to be easy to prove without passing through the Gaussian integration and referring to the universal invariant.

Finally, it remains to take into account the writhe correction, which, according to the values in~\eqref{eq:+cup} and \eqref{eq:-cup} is given by the formula
\begin{equation}\label{eq:writhe-cor}
\langle \varphi_w| RT_r\big(\xi^{-g(D_\beta)/2}\big)| \varphi_v\rangle e^{-\bar w v} =t^{g(D_\beta)/2}=t^{(g(\beta)+n-1)/2}
\end{equation}
where we use the notation $\xi^k$ from \cite{Kashaev2019} for a specific class of long knot diagrams used to compensate the writhe of the  diagram.
Putting together \eqref{eq:rt_r-d_beta} and \eqref{eq:writhe-cor}, the result for the invariant  $J_r(K)$ reads
\begin{equation}
\langle \varphi_w|J_r(K)| \varphi_v\rangle e^{-\bar w v}=\frac{t^{(g(\beta)+n-1)/2} }{\det(I_{n-1}-\hat\beta_n)}=\frac1{\Delta_K(t)}
\end{equation}
where the last equality is due to formula~\eqref{eq:re-alex-det-bur}. Taking into account the relation between $\hbar$, $t$ and the realisation of the central element $a$ of $D(\mB_1)$ as well as the symmetry of the Alexander polynomial under the substitution $t\mapsto t^{-1}$, we conclude the proof.

\section{Proof of Theorem~\ref{thm-2}}\label{sec-thm2}
In this section, we adopt the notation  of~\cite{MR2435235} and first briefly describe the unreduced and reduced Burau representations of the braid groups $B_n$ for $n\ge2$. 

For any $k\ge 1$, denote by $I_k$ the identity $k\times k$ matrix.  Let 
\begin{equation}
\psi_n\colon B_n\to\operatorname{GL}_n(\Lambda),\quad \Lambda:=\Z[t^{\pm1}],
\end{equation}
be the unrestricted Burau representation where Artin's standard generators $\sigma_i$, $1\le i< n$ , are realised by the matrices
\begin{equation}
\psi_n(\sigma_i)= U_i:= I_{i-1}\oplus U\oplus  I_{n-i-1}.
\end{equation}
For any $k\ge 1$, define the invertible upper triangular $k\times k$ matrix  
\begin{equation}
C_k=\sum_{1\le i\le j\le k}E_{i,j}=I_k+\sum_{1\le i< j\le k}E_{i,j}
\end{equation}
where $E_{i,j}$ is the matrix with the only non-zero element 1 at the place  $(i,j)$. Its inverse has the form
\begin{equation}
C_k^{-1}=I_k-\sum_{i=1}^{n-1}E_{i,i+1}.
\end{equation}
Indeed, one easily calculates
\begin{equation}
C_k(I_k-\sum_{i=1}^{n-1}E_{i,i+1})=C_k-\sum_{1\le i< j\le k}E_{i,j}=I_k.
\end{equation}
We remark on the block structure of $C_k^{\pm1}$:
\begin{equation}\label{eq:block-c_n}
C_k=\begin{pmatrix}
C_{k-1}&1_{k-1}\\
0_{k-1}^\top &1
\end{pmatrix},\quad C_k^{-1}=\begin{pmatrix}
C_{k-1}^{-1}&-C_{k-1}^{-1}1_{k-1}\\
0_{k-1}^\top &1
\end{pmatrix}=
\begin{pmatrix}
C_{k-1}^{-1}&
\begin{smallmatrix}
 0_{k-2}\\
 -1
\end{smallmatrix}
\\
0_{k-1}^\top &1
\end{pmatrix}
\end{equation}
where $0_i$ (respectively $1_i$) is the column of length $i$ composed of $0$'s (respectively of $1$'s) and, in the last equality, we use the relation
\begin{equation}
C_{k}^{-1}1_k=
\begin{pmatrix}
 0_{k-1}\\
 1
\end{pmatrix}.
\end{equation}

As is shown in~\cite{MR2435235}, for any $\beta\in B_n$, one has the the equality
\begin{equation}
C_n^{-1}\psi_n(\beta)C_n=
\begin{pmatrix}
 \psi_n^r(\beta)&0_{n-1}\\
 *_\beta &1
\end{pmatrix}
\end{equation}
where $\psi_n^r\colon B_n\to \operatorname{GL}_{n-1}(\Lambda)$ is the reduced Burau representation, and $*_\beta$ is a row of length $n-1$ over $\Lambda$ linearly depending on the rows $a_i$, $1\le i\le n-1$, of the matrix  $\psi_n^r(\beta)-I_{n-1}$ through the formula\footnote{This is the content of Lemma~3.10 of \cite{MR2435235} where the formula is written with a typo.}
\begin{equation}\label{eq:lin-dep-row}
(1-t^n)*_\beta=\sum_{i=1}^{n-1}(t^i-1) a_i.
\end{equation}
\begin{lemma}\label{lem-2}
 Let $\hat\beta_n$ be the $(n-1)\times(n-1)$ matrix obtained from $\psi_n(\beta)$ by throwing away the $n$-th row and the $n$-th column. Then, one has the following equality in $\Lambda$:
 \begin{equation}
(t^{-n}-1) \det(\hat\beta_n-I_{n-1})=(t^{-1}-1)\det(\psi_n^r(\beta)-I_{n-1}).
\end{equation}
\end{lemma}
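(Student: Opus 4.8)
The plan is to exploit the conjugation relation $C_n^{-1}\psi_n(\beta)C_n=\left(\begin{smallmatrix}\psi_n^r(\beta)&0_{n-1}\\ *_\beta&1\end{smallmatrix}\right)$ recorded above, together with the block structure~\eqref{eq:block-c_n} of $C_n^{\pm1}$, to relate the two determinants $\det(\hat\beta_n-I_{n-1})$ and $\det(\psi_n^r(\beta)-I_{n-1})$ directly. The first step is to write $M:=\psi_n(\beta)$ in the $(n-1)+1$ block form $M=\left(\begin{smallmatrix}\hat\beta_n&b_\beta\\ c_\beta&d_\beta\end{smallmatrix}\right)$ and to conjugate by $C_n=\left(\begin{smallmatrix}C_{n-1}&1_{n-1}\\ 0^\top&1\end{smallmatrix}\right)$; using $C_n^{-1}1_n=\left(\begin{smallmatrix}0_{n-1}\\ 1\end{smallmatrix}\right)$ and the explicit inverse, one computes the $(1,1)$ block of $C_n^{-1}MC_n$ to be $\psi_n^r(\beta)=C_{n-1}^{-1}\bigl(\hat\beta_n-1_{n-1}c_\beta\bigr)C_{n-1}+\bigl(\text{terms from }b_\beta,d_\beta\bigr)$ — I would carry out this $2\times 2$ block multiplication carefully to get $\psi_n^r(\beta)$ as an explicit affine expression in $\hat\beta_n$ (and $b_\beta,c_\beta,d_\beta$) conjugated by $C_{n-1}$.

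The key identity to extract is one relating $\psi_n^r(\beta)-I_{n-1}$ to $\hat\beta_n-I_{n-1}$ as matrices, not just their determinants. Since conjugation by $C_{n-1}$ preserves determinants, the $C_{n-1}$-conjugation is harmless; the real content is that $\psi_n^r(\beta)-I_{n-1}$ differs from $\hat\beta_n-I_{n-1}$ by a rank-one correction coming from the last row/column of $\psi_n(\beta)$. Concretely I expect a relation of the shape $\psi_n^r(\beta)-I_{n-1}=C_{n-1}^{-1}\bigl((\hat\beta_n-I_{n-1})+(\text{column})\,(\text{row})\bigr)C_{n-1}$, where the extra column is built from $b_\beta$ (or $1_{n-1}$) and the extra row from $c_\beta$. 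The matrix determinant lemma $\det(A+xy^\top)=\det(A)\bigl(1+y^\top A^{-1}x\bigr)$ then converts this into $\det(\psi_n^r(\beta)-I_{n-1})=\det(\hat\beta_n-I_{n-1})\cdot(1+\text{scalar})$, and the scalar is expressible through $c_\beta(I_{n-1}-\hat\beta_n)^{-1}b_\beta$ and $d_\beta$. At this point I would invoke the identity $d_\beta+c_\beta(I_{n-1}-\hat\beta_n)^{-1}b_\beta=1$ established in Section~\ref{sec-thm1}, or alternatively derive the needed scalar directly from the fact that $\psi_n(\beta)$ fixes the all-ones vector up to the expected scaling by powers of $t$ (equivalently, that $1_n^\top$ or $(1,t,t^2,\dots)$ is an eigen-row of the Burau matrix).

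The bookkeeping with the factors $t^{-n}-1$ and $t^{-1}-1$ comes from the last ingredient: formula~\eqref{eq:lin-dep-row} expressing the bottom row $*_\beta$ of $C_n^{-1}\psi_n(\beta)C_n$ linearly in terms of the rows of $\psi_n^r(\beta)-I_{n-1}$, weighted by $(t^i-1)/(1-t^n)$. I would trace how the eigen-row relation for the unreduced Burau matrix (the vector $(1,t,\dots,t^{n-1})$, suitably placed, is fixed) interacts with the conjugation by $C_n$ to produce precisely these $t$-weights, so that the "$1+\text{scalar}$" factor above is exactly $(t^{-1}-1)/(t^{-n}-1)$ up to a unit. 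Multiplying out gives the claimed equality $(t^{-n}-1)\det(\hat\beta_n-I_{n-1})=(t^{-1}-1)\det(\psi_n^r(\beta)-I_{n-1})$.

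The main obstacle I anticipate is pinning down the rank-one correction and its scalar coefficient with the correct $t$-powers: the block conjugation by $C_n$ mixes rows and columns in a way that makes it easy to lose track of signs and of whether the relevant vector is $1_{n-1}$, $C_{n-1}^{-1}1_{n-1}=\left(\begin{smallmatrix}0_{n-2}\\ 1\end{smallmatrix}\right)$, or $b_\beta$. I would control this by working the $n=2$ and $n=3$ cases explicitly as a sanity check before asserting the general formula, and by using throughout the two structural facts already available — the block form of $C_n^{\pm1}$ in~\eqref{eq:block-c_n} and the linear-dependence formula~\eqref{eq:lin-dep-row} — rather than re-deriving properties of the Burau matrices from scratch.
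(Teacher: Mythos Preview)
Your overall architecture --- conjugate by $C_n$, extract a rank-one discrepancy between $\hat\beta_n$ and $\psi_n^r(\beta)$, then compute the determinant ratio --- is exactly the paper's strategy. The block multiplication you propose yields precisely
\[
C_{n-1}^{-1}\hat\beta_n C_{n-1}=\psi_n^r(\beta)+\begin{pmatrix}0_{n-2}\\1\end{pmatrix}*_\beta,
\]
so the correct rank-one column is your candidate $C_{n-1}^{-1}1_{n-1}=e_{n-1}$ and the row is $*_\beta$, not $b_\beta$ or $c_\beta$.

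There is, however, a genuine gap in your primary plan. Invoking the identity $d_\beta+c_\beta(I_{n-1}-\hat\beta_n)^{-1}b_\beta=1$ from Section~\ref{sec-thm1} would be circular: in the paper's logical order that identity is a \emph{byproduct} of the Gaussian-integral computation of Theorem~\ref{thm-1}, whose final step appeals to Theorem~\ref{thm-2}, which in turn rests on the present lemma. Your alternative (the eigen-row of the Burau matrix) is the right instinct, and indeed that is essentially what \eqref{eq:lin-dep-row} already encodes.

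Once you know the column is $e_{n-1}$, the matrix determinant lemma is unnecessary and in fact awkward over $\Lambda$ (you would need $(\hat\beta_n-I_{n-1})^{-1}$). The paper simply observes that adding $e_{n-1}*_\beta$ modifies only the last row: writing $a_i$ for the rows of $\psi_n^r(\beta)-I_{n-1}$, one has $\det(\hat\beta_n-I_{n-1})=\det\left(\begin{smallmatrix}a_1\\\vdots\\a_{n-2}\\a_{n-1}+*_\beta\end{smallmatrix}\right)$. Multiplying the last row by $(1-t^n)$ and substituting \eqref{eq:lin-dep-row} turns that row into $(1-t^n)a_{n-1}+\sum_i(t^i-1)a_i$; the terms proportional to $a_1,\dots,a_{n-2}$ drop by elementary row operations, leaving $(t^{-1}-1)t^n a_{n-1}$, which gives the claimed factor. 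This is the cleaner route your sketch was circling around.
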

\begin{proof}
We have the following equality of matrices:
\begin{multline}
\hat\beta_n=(C_{n-1}\psi_n^r(\beta)+1_{n-1}*_\beta)C_{n-1}^{-1}
\\
\Leftrightarrow\quad  C_{n-1}^{-1}\hat\beta_nC_{n-1}=\psi_n^r(\beta)+C_{n-1}^{-1}1_{n-1}*_\beta
=
\psi_n^r(\beta)+\left(\begin{smallmatrix}
 0_{n-2}\\
 1
\end{smallmatrix}\right)*_\beta.
\end{multline}
One can verify this by explicit calculation based on the block structure~\eqref{eq:block-c_n}:
\begin{multline}
 \psi_n(\beta)=
\begin{pmatrix}
C_{n-1}&1_{n-1}\\
0_{n-1}^\top&1
\end{pmatrix}
\begin{pmatrix}
 \psi_n^r(\beta)&0_{n-1}\\
 *_\beta &1
\end{pmatrix} C_{n}^{-1}
\\
=\begin{pmatrix}
C_{n-1}\psi_n^r(\beta)+1_{n-1}*_\beta&*\\
*_\beta &1
\end{pmatrix}
\begin{pmatrix}
C_{n-1}^{-1}&*\\
0_{n-1}^\top &1
\end{pmatrix}\\
=\begin{pmatrix}
(C_{n-1}\psi_n^r(\beta)+1_{n-1}*_\beta)C_{n-1}^{-1}&*\\
* &1
\end{pmatrix}.
\end{multline}
Thus,
\begin{equation}\label{eq:det-id-1}
\det(\hat\beta_n-I_{n-1})=\det\left(\psi_n^r(\beta)-I_{n-1}+\left(\begin{smallmatrix}
 0_{n-2}\\
 1
\end{smallmatrix}\right)*_\beta\right)
=\det 
\left(\begin{smallmatrix}
 a_1\\
 \vdots\\
 a_{n-2}\\
 a_{n-1}+*_\beta
\end{smallmatrix}
\right).
\end{equation}
By multiplying both sides of \eqref{eq:det-id-1}
by $(1-t^n)$ and using~\eqref{eq:lin-dep-row}, we obtain
\begin{multline*}
(1-t^n) \det(\hat\beta_n-I_{n-1})=\det\!\! 
\left(\begin{smallmatrix}
 a_1\\
 \vdots\\
 a_{n-2}\\
(1-t^n)( a_{n-1}+*_\beta)
\end{smallmatrix}
\right)
=\det\!\! 
\left(\begin{smallmatrix}
 a_1\\
 \vdots\\
 a_{n-2}\\
(1-t^n)a_{n-1}+\sum_{i=1}^{n-1}(t^i-1)a_i
\end{smallmatrix}
\right)\\
=\det\!\!
\left(\begin{smallmatrix}
 a_1\\
 \vdots\\
 a_{n-2}\\
(t^{-1}-1)t^na_{n-1}
\end{smallmatrix}
\right)
=(t^{-1}-1)t^n\det(\psi_n^r(\beta)-I_{n-1})
\end{multline*}
where in the third equality we dropped from the sum all the terms proportional to the rows different from $n-1$.
\end{proof}
\begin{proof}[Proof of Theorem~\ref{thm-2}]
 The formula for the Alexander polynomial proven in~\cite[Theorem~3.13]{MR2435235} is of the form
 \begin{equation}
\Delta_K(t)=(-1)^{n-1}t^{(n-1-g(\beta))/2}\frac{t-1}{t^n-1}\det(\psi_n^r(\beta)-I_{n-1})
\end{equation}
which is equivalent to \eqref{eq:re-alex-det-bur} due to Lemma~\ref{lem-2}.
\end{proof}
\def\cprime{$'$} \def\cprime{$'$}

  \end{document}